\documentclass{article}
\usepackage{spconf,amsmath,amssymb,amsthm,graphicx,booktabs,array}
\usepackage{algorithm}
\usepackage{algpseudocode}

\usepackage{url}
\usepackage{iftex}
\ifPDFTeX\else
  \usepackage{fontspec}
\fi

\newcommand{\SPD}{\mathbb{S}_{++}^{n}}
\newcommand{\Sym}{\mathbb{S}^{n}}
\newcommand{\R}{\mathbb{R}}
\newcommand{\tr}{\operatorname{tr}}
\newcommand{\grad}{\operatorname{grad}}
\newcommand{\Hess}{\operatorname{Hess}}
\newcommand{\dist}{\operatorname{dist}}
\newcommand{\frob}[1]{\lVert #1\rVert_{\mathrm{F}}}
\newcommand{\norm}[1]{\lVert #1\rVert}

\newcommand{\lcp}{p}
\newcommand{\lcq}{q}
\newcommand{\lcr}{r}
\newtheorem{proposition}{Proposition}
\newtheorem{corollary}[proposition]{Corollary}

\makeatletter
\long\def\@makecaption#1#2{%
  \vskip 10pt
  \setbox\@tempboxa\hbox{#1. #2}%
  \ifdim \wd\@tempboxa >\hsize #1. #2\par \else
    \hbox to\hsize{\hfil\box\@tempboxa\hfil}%
  \fi
  \vskip\belowcaptionskip}
\makeatother
\title{OPTIMIZATION OVER COVARIANCE MATRICES WITH A PARAMETERIZED METRIC}

\name{Yibang Li$^{1}$, Bamdev Mishra$^{2}$, Pratik Jawanpuria$^{3}$ and
Cyrus Mostajeran$^{1}$}
\address{$^{1}$Nanyang Technological University, Singapore \\
  $^{2}$Microsoft India \qquad
  $^{3}$Indian Institute of Technology Bombay, India \\
  yibang001@e.ntu.edu.sg \quad bamdevm@microsoft.com \\
  pratik.jawanpuria@iitb.ac.in \quad cyrussam.mostajeran@ntu.edu.sg}

\begin{document}
\ninept
\maketitle

\begin{abstract}
The choice of Riemannian metric can strongly influence the convergence of gradient-based optimization over covariance matrices. Euclidean, Bures--Wasserstein and affine-invariant metrics are common choices, but their relative effectiveness depends on the objective.
We introduce a two-parameter family defined
by $X^{p}LX^{q}+X^{q}LX^{p}=U$, solved for $L$ at each tangent vector $U$, that
contains all three as exact members, at $(0,0)$, $(1,0)$ and $(1,1)$, and
extends past them. We treat the choice of member as a particular way of
preconditioning for a given problem. To this end, we analyze the conditioning of
the Riemannian Hessian at the solution. We show that it obeys a lower bound that
depends on $(p,q)$ only through the exponent $r=p+q$. When the Euclidean Hessian
is a pure power that mixes no eigendirections, the member $p=q=r/2$ attains that
bound, and a closed-form criterion identifies the other members that do. We
discuss ways to tune $r$ for a given problem. Experiments on real covariance data
confirm the predicted
conditioning and the benefit of tuning $r$. A task covariance example shows a further gain from tuning the shape. 
\end{abstract}

\begin{keywords}
Covariance matrices, Riemannian optimization, preconditioning,
Bures--Wasserstein, metric learning
\end{keywords}

\section{INTRODUCTION}

Symmetric positive-definite (SPD) matrices are the working object in covariance
estimation, adaptive beamforming, radar detection, diffusion tensor imaging
\cite{pennec2006riemannian}, and brain--computer interfaces
\cite{barachant2012multiclass}. The set of SPD matrices forms a manifold $\SPD$,
an open subset of the space $\Sym$ of symmetric matrices, so $\Sym$ is its
tangent space at every point. Endowed with a metric, it has the structure of a
Riemannian manifold. Minimizing a function $f$ over $\SPD$ by gradient descent 
requires a choice of Riemannian metric, which determines the gradient and hence the descent direction.

Each metric in common use on the set of SPD matrices is often
argued for on geometric grounds. Affine-invariant geometry is complete and
congruence invariant. Bures--Wasserstein geometry is
the covariance part of quadratic optimal transport between Gaussian measures
\cite{takatsu2011wasserstein,malago2018wasserstein,bhatia2019bures}.
Log-Euclidean geometry is widely used in imaging
\cite{arsigny2007geometric}. Thanwerdas and Pennec
\cite{thanwerdas2021invariant} classify the $O(n)$-invariant metrics and show
that the \emph{kernel metrics} of Hiai and Petz \cite{hiai2009means}, each fixed
by a single function of two eigenvalues, form a subclass containing most of the
metrics in common use, and elsewhere they build continua that interpolate
between named metrics \cite{thanwerdas2019affine,thanwerdas2021mixed}.

Since
the metric
is what converts $\nabla f$ into a search direction, it acts as an intrinsic
preconditioner rather than a passive modeling choice
\cite{mishra2016preconditioning}. Picking one is therefore an algorithmic
decision rather than a geometric one. This is, for example, explored by Han et
al.\ \cite{han2021riemannian} who compare Bures--Wasserstein (BW) and affine-invariant
geometry and report that each wins on a different objective class,
while \cite{han2023generalized} introduce a parameterized generalization. More
concretely, preconditioning on manifolds has attracted much attention.
Mishra and Sepulchre \cite{mishra2016preconditioning} build the metric
from the Hessian of the Lagrangian. Shustin and Avron
\cite{shustin2023preconditioning} choose the metric on the generalized Stiefel
manifold through a preconditioning scheme, motivate the choice by the condition
number of the Riemannian Hessian at the optimum, and identify the ideal
preconditioner as the Euclidean Hessian there. Gao et al.\
\cite{gao2025product} do the same on product manifolds. Closest to this work,
Zhou et al.\ \cite{zhou2026alpha} discuss Alpha-Procrustes metrics (which
include log-Euclidean and BW) for optimization, especially from a robustness
viewpoint.

\textbf{Contributions.} We explore the question of metric choice for the SPD
manifold. To this end, our contributions are the following. We introduce a two-parameter
metric family containing Euclidean, Bures--Wasserstein and affine-invariant
geometry as exact members. We show how the condition number of the Hessian depends on $p$ and $q$, and motivate ways to approximate this. Finally, our experiments show the benefit of tuning the metric.

\section{A PARAMETERIZED METRIC}
\label{sec:family}

\textbf{The metric.}
Fix two real numbers $p$ and $q$, and let $X\in\SPD$ be the point at which the
metric is being defined. This follows the usual definition
of Bures--Wasserstein geometry \cite{bhatia2019bures}. For a tangent vector
$U\in\Sym$ let $L=L_{p,q}(U)$ solve the Sylvester-type equation
\begin{equation}
X^{p}LX^{q}+X^{q}LX^{p}=U.
\label{eq:lyap}
\end{equation}
For tangent vectors $U$ and $V$, the Riemannian metric $g$ is defined as
\begin{equation}
g^{(p,q)}_X(U,V)=\tfrac{1}{2}c_{p,q}\,\tr\bigl(L_{p,q}(U)\,V\bigr),\qquad
c_{p,q}=4^{\,1-(p-q)^{2}},
\label{eq:metric}
\end{equation}
where $c_{p,q}$ is a normalizing constant. Let $d_1,\dots,d_n$ and $P$ hold the
eigenvalues and eigenvectors of $X$, and write $M'=P^{\top}MP$ for the
eigenbasis coordinates of any $M\in\Sym$. In this eigenbasis, the operator on the left-hand side of \eqref{eq:lyap} acts entrywise, multiplying $L'_{ij}$ by $d_i^p d_j^q+d_i^q d_j^p$. Since the eigenvalues of $X$ are positive, these coefficients are strictly positive for all real $p$ and $q$. The operator is therefore self-adjoint, positive definite, and invertible, so \eqref{eq:metric} defines a Riemannian metric for every such pair. At $(p,q)=(1,0)$, \eqref{eq:lyap} reduces to $XL+LX=U$, and \eqref{eq:metric} recovers the Bures--Wasserstein metric. 

\textbf{The weight.}
Solving \eqref{eq:lyap} entry by entry puts \eqref{eq:metric} in the kernel form
of Hiai and Petz \cite{hiai2009means},
\begin{equation}
g_X^{\phi}(U,V)=\sum_{i,j}\frac{U'_{ij}V'_{ij}}{\phi(d_i,d_j)},\qquad
\phi=\phi_{p,q},
\label{eq:kernel}
\end{equation}
with the \emph{weight} in the denominator,
\begin{equation}
\phi_{p,q}(x,y)=\tfrac12\,4^{\,(p-q)^{2}}\bigl(x^{p}y^{q}+x^{q}y^{p}\bigr),
\label{eq:pq}
\end{equation}
symmetric in $(p,q)$ and positively homogeneous of degree $r=p+q$, meaning
$\phi_{p,q}(tx,ty)=t^{\,r}\phi_{p,q}(x,y)$ for every $t>0$. We call $r$
the \emph{exponent} of the member of the metric family.

\textbf{Named members.}
Evaluating \eqref{eq:pq} at $(p,q)=(0,0)$, $(1,0)$ and $(1,1)$ gives the weights
$1$, $2(x+y)$ and $xy$, of degrees $0$, $1$ and $2$. These are exactly the
Euclidean,
Bures--Wasserstein and affine-invariant metrics, respectively. The metric family interpolates between them, and since $p$ and $q$ may be any
reals, it extends past them in every direction. On the diagonal $p=q=r/2$
\eqref{eq:lyap} reads $2X^{r/2}LX^{r/2}=U$, so
$L=\tfrac12X^{-r/2}UX^{-r/2}$ and \eqref{eq:metric} becomes the power family in
closed form,
\begin{equation}
g_X^{\phi}(U,V)=\tr\!\bigl(X^{-r/2}UX^{-r/2}V\bigr),\qquad
\phi(x,y)=(xy)^{r/2}.
\label{eq:slice}
\end{equation}

\textbf{Relation to known metric families.}
Every smooth positive $\phi$ defines a kernel metric
\cite{hiai2009means}, so \eqref{eq:pq} is a subfamily of a known class, singled
out by the factorization
\begin{equation}\label{eq:known_family_metric}
\phi_{p,q}(x,y)=4^{\,(p-q)^{2}}(xy)^{r/2}
\cosh\!\bigl(\tfrac{p-q}{2}\log(x/y)\bigr),
\end{equation}
in which $r$ fixes the homogeneity degree and $p-q$ the shape at fixed degree.
That separation is what makes the conditioning law of Section~\ref{sec:degree}
possible. It is the separation rather than any individual member that is new. The family also shares members with the mixed-power-Euclidean family of \cite{thanwerdas2021mixed}. The inverse-Euclidean metric is the pullback of the Euclidean metric under $X\mapsto X^{-1}$. It is our diagonal member at $r=4$ and the mixed-power-Euclidean member at $(-1,-1)$. Off the diagonal, our member $(p,q)=(\tfrac12,0)$ agrees up to a constant factor with their member at $(1,\tfrac12)$. The family
differs from the generalized Bures--Wasserstein geometry
of \cite{han2023generalized}, which deforms the metric by an SPD matrix
parameter rather than by two scalars.

\textbf{Descent on the metric family.}
Write $\mathcal G$ for the metric operator defined by
$g_X^{\phi}(U,V)=\tr(\mathcal G[U]V)$. By \eqref{eq:kernel} it divides entrywise
in the eigenbasis, so its inverse multiplies entrywise,
\begin{equation}
\mathcal G^{-1}[Z]=P(K\odot Z')P^{\top},\qquad K_{ij}=\phi_{p,q}(d_i,d_j),
\label{eq:ginv}
\end{equation}
where $\odot$ is the entrywise product.
The Riemannian gradient is $\grad f=\mathcal G^{-1}[G]=P(K\odot G')P^{\top}$
for $G=\nabla f(X)$. The member $(p,q)$ enters Algorithm~\ref{alg:msd} only
through the line that forms $K$, so one implementation covers the whole family at
the cost of the eigendecomposition already incurred. For the objectives below that
order is already paid to form $G$, so the metric adds no extra cost. Where the
gradient is cheaper, the eigendecomposition sets the cost and limits the
reachable $n$.
Steps are taken with the retraction
\begin{equation}
R_X(V)=X+V+\tfrac12VX^{-1}V ,
\label{eq:retr}
\end{equation}
which stays positive definite for every symmetric $V$. 
It is a retraction in the usual sense, since $R_X(0)=X$ and
$\mathrm{D}R_X(0)[V]=V$. Most members have no exponential map in
closed form, and \eqref{eq:retr} replaces it at the price of one triple product.

\begin{algorithm}[t]
\caption{Descent on $\SPD$ in the metric \eqref{eq:metric}}
\label{alg:msd}
\begin{algorithmic}[1]
\Require $f$, $X_0\in\SPD$, member $(p,q)$, steps $t_k$
\For{$k=0,1,2,\dots$}
  \State $G\gets\nabla f(X_k)$
    \Comment{Euclidean gradient}
  \State $(d,P)\gets\operatorname{eig}(X_k)$
    \Comment{$O(n^{3})$}
  \State $K_{ij}\gets\phi_{p,q}(d_i,d_j)$
    \Comment{weight \eqref{eq:pq}}
  \State $U\gets P\bigl(K\odot(P^{\top}GP)\bigr)P^{\top}$
    \Comment{Riemannian gradient \eqref{eq:ginv}}
  \State $X_{k+1}\gets X_k-t_kU+\tfrac{t_k^{2}}{2}\,UX_k^{-1}U$
    \Comment{retract \eqref{eq:retr}}
\EndFor
\end{algorithmic}
\end{algorithm}

\section{The role of $\lcr=\lcp+\lcq$ in Hessian conditioning}
\label{sec:degree}

Finding the right $(p,q)$ for a given problem looks like a two-dimensional
search, scored by the conditioning of the Riemannian Hessian of the objective in
the new metric. Below, we give a principled way to choose $r$, $p$ and $q$.

The useful quantity is the condition number $\kappa_{\phi}$ of the Riemannian
Hessian at the solution, which governs the asymptotic rate of Riemannian gradient
descent under the usual local assumptions \cite{gao2025product}.

Let $X_\star$ be a critical point of $f$, and write its eigendecomposition as
$X_\star=P\operatorname{diag}(d_1,\dots,d_n)P^{\top}$. With $e_1,\dots,e_n$ the
standard basis of $\R^{n}$, let
\[
E_{ii}=Pe_ie_i^{\top}P^{\top},\qquad
E_{ij}=\tfrac{1}{\sqrt2}P(e_ie_j^{\top}+e_je_i^{\top})P^{\top}\ (i<j)
\]
be the frames of $\Sym$ built from the eigenbasis of $X_\star$, one per unordered
pair, so $m=n(n+1)/2$ in all. They are orthonormal for the trace inner product.
Write $\mathcal B=\nabla^{2}f(X_\star)$ for the Euclidean Hessian at $X_\star$,
taken positive definite so that condition numbers are defined, and
\begin{equation}
b_{ij}=\tr\bigl(E_{ij}\,\mathcal B[E_{ij}]\bigr)
\label{eq:bij}
\end{equation}
for its diagonal on those frames. Call $\mathcal B$ a \emph{Schur multiplier}
when it rescales each entry of the eigenbasis without mixing entries, so that
$\mathcal B[E_{ij}]=b_{ij}E_{ij}$.

\begin{proposition}[Riemannian Hessian at a critical point]
\label{prop:hess}
At a critical point $X_\star$ of $f$, in every member of \eqref{eq:pq},
\begin{equation}
\Hess f(X_\star)=\mathcal G^{-1}\circ\mathcal B ,
\label{eq:hess}
\end{equation}
which is self-adjoint for $g^{\phi}$ and so has real spectrum. Its matrix in the
$g^{\phi}$-orthonormal frames is $\mathcal B$ scaled row and column by
$\phi(d_i,d_j)^{1/2}$, so its diagonal there is
\begin{equation}
\lambda_{ij}=\phi(d_i,d_j)\,b_{ij}.
\label{eq:lambda}
\end{equation}
\end{proposition}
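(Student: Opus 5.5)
We work directly from the definition of the Riemannian Hessian as the covariant derivative of the gradient field, with the Levi-Civita connection of $g^{\phi}$, and exploit the vanishing of the Euclidean gradient at $X_\star$.

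\textbf{Step 1: write the gradient field.} Write $\grad f(X)=\mathcal G_X^{-1}[\nabla f(X)]$ as a field on $\SPD$. Since $\SPD$ is open in $\Sym$, the connection has the form
\[
\nabla_V W=\mathrm{D}W[V]+\Gamma_X(V,W),
\]
with $\Gamma_X$ a bilinear Christoffel term that is smooth in $X$.

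\textbf{Step 2: evaluate at the critical point.} Here $\grad f(X_\star)=0$, so the Christoffel term $\Gamma_{X_\star}(V,\grad f)$ vanishes. The product rule then gives
\[
\Hess f(X_\star)[V]=\mathrm{D}(\mathcal G^{-1})[V]\bigl[\nabla f(X_\star)\bigr]+\mathcal G^{-1}\bigl[\nabla^2 f(X_\star)[V]\bigr].
\]
The first term vanishes because $\nabla f(X_\star)=0$, which yields \eqref{eq:hess}. The point to be careful about is that no derivative of the metric survives. The argument never uses the specific form of $\phi_{p,q}$, only the smoothness of $X\mapsto\mathcal G_X^{-1}$. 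That smoothness holds because $K_{ij}=\phi_{p,q}(d_i,d_j)$ is built from a smooth symmetric kernel, and the map $X\mapsto P(K\odot P^{\top}ZP)P^{\top}$ is a smooth matrix function in the Daleckii--Krein sense. Note that $\mathcal G^{-1}[Z]=P(K\odot Z')P^\top$ is well defined even when eigenvalues repeat, since $K$ is constant on repeated-eigenvalue blocks.

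\textbf{Step 3: self-adjointness.} We check
\[
g^{\phi}\bigl(\mathcal G^{-1}\mathcal B U,V\bigr)=\tr\bigl(\mathcal G\mathcal G^{-1}\mathcal B[U]\,V\bigr)=\tr\bigl(\mathcal B[U]V\bigr).
\]
This is symmetric in $U$ and $V$ because the Euclidean Hessian $\mathcal B$ is self-adjoint for the trace inner product. A self-adjoint operator on an inner product space has real spectrum.

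\textbf{Step 4: matrix in the orthonormal frame.} By \eqref{eq:kernel}, $g^{\phi}(E_{ij},E_{kl})=\delta_{(ij),(kl)}/\phi(d_i,d_j)$. This holds for the off-diagonal frames as well, since both symmetric entries carry the same weight and the factor $1/\sqrt2$ normalises the pair. Hence $F_{ij}=\phi(d_i,d_j)^{1/2}E_{ij}$ is $g^{\phi}$-orthonormal, and $\mathcal G^{-1}[E_{kl}]=\phi(d_k,d_l)E_{kl}$ by \eqref{eq:ginv}.

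\textbf{Step 5: compute the entries.} The $(ij),(kl)$ entry is
\[
g^{\phi}\bigl(F_{ij},\mathcal G^{-1}\mathcal B F_{kl}\bigr)=\tr\bigl(F_{ij}\,\mathcal B[F_{kl}]\bigr)=\phi(d_i,d_j)^{1/2}\phi(d_k,d_l)^{1/2}\,\tr\bigl(E_{ij}\mathcal B[E_{kl}]\bigr).
\]
This is $\mathcal B$ scaled row and column by $\phi^{1/2}$. Its diagonal entries are $\lambda_{ij}=\phi(d_i,d_j)\,b_{ij}$, by \eqref{eq:bij}, which is \eqref{eq:lambda}.
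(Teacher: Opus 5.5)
Your proof is correct and follows essentially the same route as the paper. Both write the connection on the open set $\SPD$ as a directional derivative plus a bilinear term and note that both correction pieces vanish because they are linear in $\nabla f(X_\star)=0$. Both get self-adjointness from $g^{\phi}(\mathcal G^{-1}\mathcal B[U],V)=\tr(\mathcal B[U]V)$ and rescale the frames $E_{ij}$ by $\phi(d_i,d_j)^{1/2}$. Your explicit entry computation $g^{\phi}(F_{ij},\mathcal G^{-1}\mathcal B[F_{kl}])=\tr(F_{ij}\,\mathcal B[F_{kl}])$ simply spells out the row-and-column scaling that the paper states more briefly.
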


\begin{proof}
Since $\SPD$ is open in $\Sym$, the Levi-Civita connection of $g^{\phi}$ is the
directional derivative plus a term bilinear in its two arguments. Differentiating
$\grad f=\mathcal G^{-1}[\nabla f]$ therefore splits $\Hess f(X)[U]$ into
$\mathcal G^{-1}\bigl[\nabla^{2}f(X)[U]\bigr]$ and a remainder of two pieces, the
derivative of $\mathcal G^{-1}$ applied to $\nabla f(X)$ and the connection term
evaluated at $\grad f(X)$. Both are linear in $\nabla f(X)$. At a critical point
$\nabla f(X_\star)=0$, so the remainder vanishes and \eqref{eq:hess} follows. The
remainder carried every appearance of the derivative of the metric, which is why
only $\mathcal G^{-1}$ survives.

Self-adjointness follows because
\[
g^{\phi}\bigl(\mathcal G^{-1}[\mathcal B[U]],V\bigr)=\tr\bigl(\mathcal B[U]V\bigr)
\]
is symmetric in $U$ and $V$, since $\mathcal B$ is a Euclidean Hessian. For the
last claim, \eqref{eq:ginv} gives
$\mathcal G^{-1}[E_{ij}]=\phi(d_i,d_j)E_{ij}$, so the frames stay orthogonal under
$g^{\phi}$ but carry $g^{\phi}(E_{ij},E_{ij})=1/\phi(d_i,d_j)$, and
$\phi(d_i,d_j)^{1/2}E_{ij}$ are the $g^{\phi}$-orthonormal ones. In these orthonormal frames, \eqref{eq:hess} gives a rescaled version of the matrix of $\mathcal B$ in the frames $E_{ij}$. Each row and column indexed by $(i,j)$ is multiplied by $\phi(d_i,d_j)^{1/2}$. The diagonal entries are therefore $\phi(d_i,d_j) b_{ij}$, as in \eqref{eq:lambda}. 
\end{proof}

\begin{corollary}[Diagonal bound]
\label{cor:diagbound}
The condition number $\kappa_{\phi}$ of the Riemannian Hessian
$\Hess f(X_\star)$ obeys
\begin{equation}
\kappa_{\phi}\ \geq\ \frac{\max_{ij}\lambda_{ij}}{\min_{ij}\lambda_{ij}}
\label{eq:diagbound}
\end{equation}
for every $\mathcal B$, since a diagonal entry of a symmetric matrix is a
Rayleigh quotient. Equality holds when $\mathcal B$ is a Schur multiplier, where
the frames are eigenvectors and the $\lambda_{ij}$ are the whole spectrum.
\end{corollary}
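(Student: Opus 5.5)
The plan is to work entirely with the matrix representation supplied by Proposition~\ref{prop:hess}. In the $g^{\phi}$-orthonormal frames $F_{ij}=\phi(d_i,d_j)^{1/2}E_{ij}$, the operator $\Hess f(X_\star)$ is represented by the $m\times m$ matrix
\[
H=D^{1/2}BD^{1/2},
\]
where $B$ is the matrix of $\mathcal B$ in the frames $E_{ij}$ and $D=\operatorname{diag}(\phi(d_i,d_j))$. Because $\mathcal B$ is a Euclidean Hessian, $B$ is symmetric. Because $\mathcal B$ is assumed positive definite and $\phi>0$, the matrix $H$ is symmetric positive definite. Its eigenvalues $0<\mu_{\min}\le\mu_{\max}$ are the spectrum of $\Hess f(X_\star)$, so $\kappa_{\phi}=\mu_{\max}/\mu_{\min}$. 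The diagonal entries of $H$ are the $\lambda_{ij}$ of \eqref{eq:lambda}.

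For the inequality, I would use the Rayleigh quotient. Each $\lambda_{ij}=e_{(ij)}^{\top}He_{(ij)}$ for the coordinate unit vector $e_{(ij)}\in\R^m$. By the Courant--Fischer characterisation, every Rayleigh quotient lies in $[\mu_{\min},\mu_{\max}]$. Hence $\max_{ij}\lambda_{ij}\le\mu_{\max}$ and $\min_{ij}\lambda_{ij}\ge\mu_{\min}>0$. Dividing these gives \eqref{eq:diagbound}. Positivity of $\min\lambda_{ij}$ also follows directly, since $b_{ij}>0$ by positive definiteness of $\mathcal B$ and $\phi>0$.

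For the equality case, assume $\mathcal B$ is a Schur multiplier, so $\mathcal B[E_{ij}]=b_{ij}E_{ij}$. Then $B$ is diagonal, and therefore $H=DB$ is diagonal with entries $\phi(d_i,d_j)b_{ij}=\lambda_{ij}$. Equivalently, $\mathcal G^{-1}\mathcal B[E_{ij}]=\lambda_{ij}E_{ij}$ by \eqref{eq:ginv}, so the frames are eigenvectors. In either form, the spectrum is exactly $\{\lambda_{ij}\}$, so $\mu_{\max}=\max\lambda_{ij}$, $\mu_{\min}=\min\lambda_{ij}$, and equality holds.

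The only point needing care is that the spectrum of the self-adjoint operator equals that of the symmetric matrix $H$. This requires the frames $F_{ij}$ to be $g^{\phi}$-orthonormal, which was established in the proof of Proposition~\ref{prop:hess}. Beyond that, nothing is delicate.
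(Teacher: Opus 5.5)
Your proposal is correct and follows the paper's own argument. The diagonal entries $\lambda_{ij}$ of the symmetric matrix $H=D^{1/2}BD^{1/2}$ are Rayleigh quotients, so they lie in $[\mu_{\min},\mu_{\max}]$, and equality holds when $\mathcal B$ is a Schur multiplier because $H$ is then diagonal. One small precision: the spectrum of $\Hess f(X_\star)$ equals that of its matrix in any basis, so what the $g^{\phi}$-orthonormality of the $F_{ij}$ actually gives you is the symmetry of $H$, and that symmetry is what justifies the Rayleigh-quotient step.
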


The metric reaches the Hessian only through $\mathcal G^{-1}$ and never through
its derivative, so choosing a member of \eqref{eq:pq} is always choosing a
diagonal preconditioner in the eigenbasis of the current iterate, a valid local
choice near the critical point. Write $\kappa_{p,q}$ for $\kappa_{\phi}$ at
$\phi=\phi_{p,q}$, and $\kappa_{r}$ when that member is the diagonal one
$p=q=r/2$. Plain $\kappa=\kappa(X_\star)=d_{\max}/d_{\min}$ is the condition
number of $X_\star$ itself.
Two particular cases on $\mathcal B$ are useful to consider: (i) $\mathcal B$ has the
\emph{diagonal pure power} of degree $\gamma-2$ when $b_{ii}=c\,d_i^{\,\gamma-2}$
for every $i$, and (ii) it has the \emph{full pure power} when all entries of the frame diagonal obey $b_{ij}=c\,(d_id_j)^{(\gamma-2)/2}$. 
The
second implies the first. Define $r^{\star}=2-\gamma$ throughout. Here $c>0$ is a
scale factor shared by all frames, and it affects no condition number below,
since $\kappa_{\phi}$ is a ratio of eigenvalues.

\begin{proposition}[Conditioning floor]
\label{prop:degree}
Let $\phi$ be any weight in \eqref{eq:kernel} that is positively homogeneous of
degree $r$, and let $\mathcal B$ have the diagonal pure power. Then
\begin{equation}
\kappa_{\phi}\ \geq\
\frac{\max_{ij}\lambda_{ij}}{\min_{ij}\lambda_{ij}}\ \geq\
\kappa^{\,|r-r^{\star}|}. 
\label{eq:lower}
\end{equation}
This bound holds independently of the values of $b_{ij}$ for $i<j$ and of the couplings between distinct frames. 
\end{proposition}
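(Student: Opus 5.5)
The first inequality in \eqref{eq:lower} is Corollary~\ref{cor:diagbound}, so only the second needs an argument. The plan is to discard every frame except the diagonal ones $E_{ii}$. On those frames the weight collapses to a single power of $d_i$ by homogeneity, and the diagonal pure power assumption turns $\lambda_{ii}$ into a pure power of $d_i$.

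\textbf{Step 1: evaluate the diagonal entries.} Positive homogeneity of degree $r$ gives $\phi(d_i,d_i)=d_i^{\,r}\phi(1,1)$. Write $\phi(1,1)=a$, which is positive because $\phi$ is a positive weight. By \eqref{eq:lambda} and the hypothesis $b_{ii}=c\,d_i^{\,\gamma-2}$,
\[
\lambda_{ii}=a\,c\,d_i^{\,r+\gamma-2}=a\,c\,d_i^{\,r-r^{\star}},
\]
using $r^{\star}=2-\gamma$.

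\textbf{Step 2: restrict the max and min.} The maximum over all frames is at least the maximum over $i$ alone, and the minimum over all frames is at most the minimum over $i$ alone. Hence
\[
\frac{\max_{ij}\lambda_{ij}}{\min_{ij}\lambda_{ij}}\ \geq\ \frac{\max_i d_i^{\,r-r^{\star}}}{\min_i d_i^{\,r-r^{\star}}}.
\]
Since $\min_{ij}\lambda_{ij}\le\min_i\lambda_{ii}$, the off-diagonal entries $\lambda_{ij}$ with $i<j$ can only enlarge the left-hand ratio, so their values never enter.

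\textbf{Step 3: identify the power of $\kappa$.} The map $t\mapsto t^{s}$ is monotone on $(0,\infty)$, increasing for $s>0$ and decreasing for $s<0$. With $s=r-r^{\star}$, the ratio on the right of Step 2 is therefore $(d_{\max}/d_{\min})^{|s|}=\kappa^{\,|r-r^{\star}|}$. When $s=0$ it equals $1$, and the bound is trivial.

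\textbf{Independence claim.} Steps 1 to 3 use only the diagonal entries $b_{ii}$ of $\mathcal B$ on the frames $E_{ii}$. The off-diagonal couplings of $\mathcal B$ enter only through Corollary~\ref{cor:diagbound}, which already holds for every $\mathcal B$. This gives the final sentence of the proposition.

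\textbf{Main obstacle.} The only delicate point is positivity: $\min_{ij}\lambda_{ij}>0$ is needed for the ratio to make sense. It holds because $\phi>0$ and $b_{ij}>0$, since $\mathcal B$ is positive definite and each $b_{ij}$ is a Rayleigh quotient. The rest is bookkeeping.
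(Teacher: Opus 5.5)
Your proposal is correct and follows the paper's proof. The first inequality is Corollary~\ref{cor:diagbound}; for the second you keep only the frames $E_{ii}$, where homogeneity gives $\lambda_{ii}=c\,\phi(1,1)\,d_i^{\,r-r^{\star}}$, and the ratio of the extreme values of these is $\kappa^{\,|r-r^{\star}|}$ whatever the off-diagonal data. You also spell out why the ratio is defined, namely $\phi>0$ and $b_{ij}>0$ as Rayleigh quotients of the positive definite $\mathcal B$, which the paper leaves implicit.
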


\begin{proof}
The first inequality is \eqref{eq:diagbound}. For the second, only the pairs
$(i,i)$ are needed. Homogeneity gives
$\phi(d,d)=d^{r}\phi(1,1)$, so $\lambda_{ii}=c\,\phi(1,1)\,d_i^{\,r+\gamma-2}$,
whose ratio at $d_{\max}$ and at $d_{\min}$ is $\kappa^{\,r-r^{\star}}$. The
largest $\lambda_{ij}$ over the smallest is therefore at least that ratio, and at
least its reciprocal since either of the two may be the larger, hence at least
$\kappa^{\,|r-r^{\star}|}$.
\end{proof}

It should be noted that the floor (right hand side) binds every kernel metric of
degree $r$, not only members of \eqref{eq:pq}. This is true for the
Alpha-Procrustes metrics \cite{zhou2026alpha}. Their metric operator is diagonal
in the frames $E_{ij}$ and equal to $d^{2\alpha-2}$ on the $E_{ii}$ by \cite[Theorem~4] {zhou2026alpha}, so their weight carries degree $2(1-\alpha)$ and
\eqref{eq:lower} puts their floor at $\kappa^{\,|2(1-\alpha)-r^{\star}|}$, which at
$r^{\star}=0$ is the $\kappa^{\,2|\alpha-1|}$ law they report alongside \cite[Theorem~6]{zhou2026alpha}.
The paper \cite{zhou2026alpha} recommends $\alpha=1$, whose degree is
zero, so the floor it inherits is $\kappa^{\,|r^{\star}|}$. For objectives with
$r^{\star}=0$ this is $1$, and \eqref{eq:lower} leaves the member free.

\begin{corollary}[When the floor is attained]
\label{cor:attain}
Assume in addition that $\mathcal B$ is a Schur multiplier and has the full pure
power. Every $\lambda_{ij}$ is then the value at $(d_i,d_j)$ of the single
function
\[
\lambda(x,y)=c\,\phi(x,y)\,(xy)^{(\gamma-2)/2},\qquad x,y>0 .
\]
Furthermore, if $\lambda$ is monotone in each argument, then both inequalities in \eqref{eq:lower}
are equalities.
\end{corollary}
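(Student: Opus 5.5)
The plan has two parts. First, substitute the full pure power into \eqref{eq:lambda} to get the function $\lambda(x,y)$. Second, use monotonicity to show that the extremes of $\lambda$ over the eigenvalue grid occur on the diagonal pairs, where Proposition~\ref{prop:degree} already computed them.

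\textbf{Identifying $\lambda$.} By Proposition~\ref{prop:hess}, $\lambda_{ij}=\phi(d_i,d_j)\,b_{ij}$. The full pure power gives $b_{ij}=c\,(d_id_j)^{(\gamma-2)/2}$ for all $i\le j$. Substituting gives $\lambda_{ij}=\lambda(d_i,d_j)$ with $\lambda$ as stated. Because $\mathcal B$ is a Schur multiplier, Corollary~\ref{cor:diagbound} says the first inequality in \eqref{eq:lower} is an equality. So it remains to show that $\max_{ij}\lambda_{ij}/\min_{ij}\lambda_{ij}=\kappa^{|r-r^\star|}$.

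\textbf{Using monotonicity.} Since $\phi$ is symmetric, so is $\lambda$. I would read ``monotone in each argument'' as follows: for fixed $y$, $x\mapsto\lambda(x,y)$ is monotone, and by symmetry it has the same direction in the other argument. I would also argue that this direction is the same for every fixed value of the other variable. The cleanest way is homogeneity: $\lambda$ is positively homogeneous of degree $r+\gamma-2=r-r^\star$, so $\lambda(x,y)=y^{r-r^\star}\lambda(x/y,1)$. Monotonicity in $x$ is then governed by the single one-variable function $s\mapsto\lambda(s,1)$, whose direction does not depend on $y$. Suppose $\lambda$ is nondecreasing in each argument. Then for all $i,j$,
\[
\lambda(d_{\min},d_{\min})\le\lambda(d_i,d_j)\le\lambda(d_{\max},d_{\max}),
\]
so the max and min of the $\lambda_{ij}$ are attained at diagonal pairs. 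The nonincreasing case swaps the two ends. These diagonal values are $c\,\phi(1,1)\,d^{\,r-r^\star}$ at $d=d_{\max}$ and $d=d_{\min}$, so their ratio is $\kappa^{\pm(r-r^\star)}$. Taking the larger over the smaller gives exactly $\kappa^{|r-r^\star|}$, which is the second equality.

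\textbf{Main obstacle.} The delicate point is making sure the monotonicity direction is uniform, so that no off-diagonal pair $(d_{\max},d_{\min})$ can beat both diagonal corners. If $\lambda$ were increasing in $x$ for some $y$ and decreasing for others, the grid extremes could sit off the diagonal. The homogeneity reduction above is what I would rely on to rule this out. I would also note the degenerate case $r=r^\star$, where $\lambda$ is constant along rays. There monotonicity combined with degree-zero homogeneity forces $\lambda(s,1)$ to be monotone in $s$. Symmetry gives $\lambda(s,1)=\lambda(1/s,1)$, and a monotone function with this property must be constant. So all $\lambda_{ij}$ are equal and $\kappa_\phi=1=\kappa^0$, consistent with the general formula.
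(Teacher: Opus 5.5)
Your proposal is correct and follows the paper's proof: Corollary~\ref{cor:diagbound} makes the first inequality an equality, and monotonicity of the symmetric $\lambda$ places both extremes at the diagonal pairs $(d_{\min},d_{\min})$ and $(d_{\max},d_{\max})$, whose ratio is $\kappa^{|r-r^{\star}|}$ by the computation in Proposition~\ref{prop:degree}. The paper's proof leaves implicit that the direction of monotonicity in one argument does not depend on the value of the other, and your homogeneity reduction $\lambda(x,y)=y^{\,r-r^{\star}}\lambda(x/y,1)$ supplies exactly that step.
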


\begin{proof}
The first inequality becomes an equality by Corollary~\ref{cor:diagbound}. For
the second, a
symmetric $\lambda$ monotone in one argument is monotone the
same way in the other, so its extremes over the pairs $(d_i,d_j)$ sit where both
arguments are extreme, at $(d_{\max},d_{\max})$ and $(d_{\min},d_{\min})$.
Both extrema occur at diagonal frames $E_{ii}$. By the proof of Proposition~\ref{prop:degree}, the ratio of the maximum to the minimum is $\kappa^{\,|r-r^{\star}|}$. Thus both inequalities in \eqref{eq:lower} are equalities. 
\end{proof}

The diagonal member always meets this hypothesis. At $p=q=r/2$ the function
$\lambda$ is the pure power $c\,(xy)^{(r+\gamma-2)/2}$, hence monotone at every
$r$.

The right-hand side of \eqref{eq:lower} is minimized at $r=r^{\star}$, where it equals one. At other exponents, $\kappa_\phi$ is at least $\kappa^{\,|r-r^{\star}|}$. 
Minimizing the bound is not the same as minimizing $\kappa_{\phi}$, and the two
coincide when both inequalities become equalities.

Whether a given member meets the monotonicity hypothesis is decidable in closed
form.
Under the full pure power, $r^{\star}=2-\gamma$ gives
$(\gamma-2)/2=-r^{\star}/2$. Multiplication by this factor shifts both exponents in \eqref{eq:pq} by $-r^{\star}/2$. The function in Corollary~\ref{cor:attain} therefore becomes 
\begin{equation}
\lambda(x,y)\ \propto\ x^{\tilde p}y^{\tilde q}+x^{\tilde q}y^{\tilde p},\qquad
\tilde p=p-\tfrac{r^{\star}}{2},\quad \tilde q=q-\tfrac{r^{\star}}{2}.
\label{eq:lamab}
\end{equation}
The omitted positive factor is common to all frames and cancels from condition numbers.
The shifted
exponents carry the same two quantities as before, since
$\tilde p+\tilde q=r-r^{\star}$ and $\tilde p-\tilde q=p-q$.

\begin{proposition}[Monotonicity criterion]
\label{prop:quadrant}
The function \eqref{eq:lamab} is monotone in each argument on $(0,\infty)^{2}$
if and only if $\tilde p\,\tilde q\geq0$, equivalently
\begin{equation}
|p-q|\ \leq\ |r-r^{\star}| .
\label{eq:budget}
\end{equation}
\end{proposition}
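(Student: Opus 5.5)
We will prove the criterion by a direct computation of the partial derivative of $\lambda(x,y)=x^{\tilde p}y^{\tilde q}+x^{\tilde q}y^{\tilde p}$ on $(0,\infty)^2$. By symmetry of $\lambda$ in $(x,y)$, monotonicity in $x$ for each fixed $y$ is equivalent to monotonicity in $y$ for each fixed $x$, so it suffices to treat $x$. We compute
\[
x\,\partial_x\lambda(x,y)=\tilde p\,x^{\tilde p}y^{\tilde q}+\tilde q\,x^{\tilde q}y^{\tilde p}
= x^{\tilde q}y^{\tilde q}\bigl(\tilde p\,t^{\tilde p-\tilde q}+\tilde q\bigr),\qquad t=x/y>0 .
\]
The prefactor is positive, so the sign of $\partial_x\lambda$ is the sign of $h(t)=\tilde p\,t^{\delta}+\tilde q$ with $\delta=\tilde p-\tilde q=p-q$. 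Note that we read ``monotone in each argument'' as monotone on each line $y=\text{const}$ (either direction allowed, possibly depending on $y$). Since $h$ depends only on the ratio $t$, and the range of $t$ along any such line is all of $(0,\infty)$, the question reduces to whether $h$ has constant sign on $(0,\infty)$.

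For sufficiency, if $\tilde p\tilde q\ge 0$ then both terms of $h$ have the same sign (or vanish), so $h$ keeps one sign for all $t$. Hence $\lambda(\cdot,y)$ is nondecreasing if $\tilde p,\tilde q\ge0$ and nonincreasing if both are $\le0$, uniformly in $y$. For necessity, if $\tilde p\tilde q<0$ then $\delta\neq0$ and $t^{\delta}$ sweeps $(0,\infty)$. So $h$ takes values arbitrarily close to $\tilde q$, as $t^\delta\to0$, and values of the sign of $\tilde p$, as $t^\delta\to\infty$. These signs are opposite, so $h$ changes sign at $t_0=(-\tilde q/\tilde p)^{1/\delta}$. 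Thus, for every fixed $y$, $\lambda(\cdot,y)$ strictly increases on one side of $x=t_0y$ and strictly decreases on the other, and is not monotone.

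Finally we translate the condition. Using $\tilde p+\tilde q=r-r^\star$ and $\tilde p-\tilde q=p-q$ from the discussion after \eqref{eq:lamab},
\[
4\tilde p\tilde q=(\tilde p+\tilde q)^2-(\tilde p-\tilde q)^2=(r-r^\star)^2-(p-q)^2,
\]
so $\tilde p\tilde q\ge0$ is equivalent to $|p-q|\le|r-r^\star|$, which is \eqref{eq:budget}. The only delicate step is the necessity direction, specifically checking that the sign change genuinely occurs inside the domain. This is handled by observing that $t$ ranges over all of $(0,\infty)$ along every horizontal line, so the boundary cases where $\tilde p$ or $\tilde q$ vanishes (equality in \eqref{eq:budget}) fall on the monotone side.
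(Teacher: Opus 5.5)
Your proof is correct and follows the paper's route. Both read off the sign of $\partial_x\lambda$, get sufficiency from the two terms sharing a sign, and convert via $4\tilde p\tilde q=(r-r^{\star})^{2}-(p-q)^{2}$; your reduction to $h(t)=\tilde p\,t^{\tilde p-\tilde q}+\tilde q$ just makes explicit, at $t_0$ for every fixed $y$, the sign change the paper gets from which term dominates as $x\to\infty$ versus $x\to0^{+}$. One harmless slip: the positive factor you extract from $x\,\partial_x\lambda$ should be $x^{\tilde q}y^{\tilde p}$, not $x^{\tilde q}y^{\tilde q}$, and since it is still positive the sign argument stands.
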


\begin{proof}
The derivative of $\lambda$ in $x$ is
$\tilde p\,x^{\tilde p-1}y^{\tilde q}+\tilde q\,x^{\tilde q-1}y^{\tilde p}$,
nonnegative everywhere when $\tilde p,\tilde q\geq0$ and nonpositive everywhere
when $\tilde p,\tilde q\leq0$. If instead $\tilde p>0>\tilde q$ then
$\tilde p-1>\tilde q-1$, so the first term dominates as $x\to\infty$ and the
second, which is negative, dominates as $x\to0^{+}$, and the derivative changes
sign. The case $\tilde q>0>\tilde p$ is the same with the terms exchanged. The
second form follows from
$4\tilde p\,\tilde q=(r-r^{\star})^{2}-(p-q)^{2}$.
\end{proof}

How far a member may sit off the diagonal and stay monotone is therefore how far
its exponent sits from optimal. The diagonal $p=q$ meets \eqref{eq:budget} at
every exponent, which is why we fix $p=q=r/2$ and tune
$r$ alone. Monotonicity is a sufficient condition for attaining the floor, not a
necessary one, and the next proposition specifies by how much.

Under the same hypotheses, the conditioning is available in closed form, which
settles what happens off the diagonal rather than only when the floor is met.

\begin{proposition}[Exact conditioning in the pure-power Schur regime]
\label{prop:exact}
Assume $\mathcal B$ is a Schur multiplier with the full pure power, and write
$s=r-r^{\star}$ for the degree error and $a=p-q$ for the shape. Then
\begin{equation}
\kappa_{p,q}=\max\Bigl\{\kappa^{\,|s|},\ \kappa^{\,|s|/2}
\cosh\bigl(\tfrac{a}{2}\log\kappa\bigr)\Bigr\}.
\label{eq:exact}
\end{equation}
In particular $\kappa_{p,q}=\cosh(\tfrac{a}{2}\log\kappa)$ at $r=r^{\star}$, so
once $\kappa>1$ the diagonal member is the unique minimizer and every other
member misses the floor by a factor that grows with $|p-q|$.
\end{proposition}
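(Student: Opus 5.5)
Since $\mathcal B$ is a Schur multiplier, Corollary~\ref{cor:diagbound} tells us that the spectrum of $\Hess f(X_\star)$ is exactly $\{\lambda_{ij}\}$. Under the full pure power, Corollary~\ref{cor:attain} and \eqref{eq:lamab} give $\lambda_{ij}\propto\lambda(d_i,d_j)$ with $\lambda(x,y)=x^{\tilde p}y^{\tilde q}+x^{\tilde q}y^{\tilde p}$, where $\tilde p+\tilde q=s$ and $\tilde p-\tilde q=a$. The common factor cancels from the condition number, so $\kappa_{p,q}$ is the ratio of the maximum to the minimum of $\lambda$ over pairs of eigenvalues of $X_\star$. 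We factor as in \eqref{eq:known_family_metric}:
\[
\lambda(x,y)=2(xy)^{s/2}\cosh\bigl(\tfrac a2\log(x/y)\bigr).
\]
Pass to log coordinates $u=\log x$, $v=\log y$, normalised so that the eigenvalues lie in $[0,\ell]$ with $\ell=\log\kappa$, attained at both endpoints. Up to a constant this reduces the problem to the ratio of the extrema of
\[
h(u,v)=e^{s(u+v)/2}\cosh\bigl(\tfrac a2(u-v)\bigr)
\]
over the finite set of pairs drawn from the log-eigenvalues. The set contains the corner points $(0,0)$, $(\ell,\ell)$ and $(0,\ell)$.

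Next, write $\sigma=(u+v)/2\in[0,\ell]$ and $\delta=|u-v|$. Then $h=e^{s\sigma}\cosh(a\delta/2)$, and the constraint is $\delta\le 2\min(\sigma,\ell-\sigma)$.

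\emph{Minimum.} Since $\cosh\ge1$, we have $h\ge e^{s\sigma}\ge\min(1,e^{s\ell})$. This lower bound is attained on the diagonal at $\sigma=0$ or $\sigma=\ell$, both of which are available.

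\emph{Maximum.} By symmetry in the sign of $s$, take $s\ge0$. For fixed $\sigma$, the best choice is the largest admissible $\delta$, so we maximise $g(\sigma)=e^{s\sigma}\cosh\bigl(a\min(\sigma,\ell-\sigma)\bigr)$ over $[0,\ell]$.
\begin{itemize}
\item On $[\ell/2,\ell]$, $g=e^{s\sigma}\cosh(a(\ell-\sigma))$ is a sum of two exponentials in $\sigma$, hence convex. Its maximum on this interval is therefore at an endpoint: $\sigma=\ell$, giving $e^{s\ell}$, or $\sigma=\ell/2$, giving $e^{s\ell/2}\cosh(a\ell/2)$.
\item On $[0,\ell/2]$, $g$ is increasing, so its maximum there is at $\sigma=\ell/2$.
\end{itemize}
Both endpoint values are attained by actual pairs: $(\ell,\ell)$ for the first and $(0,\ell)$ for the second. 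The maximum is thus $\max\{e^{s\ell},\,e^{s\ell/2}\cosh(a\ell/2)\}$.

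Dividing by the minimum $1$ gives \eqref{eq:exact} for $s\ge0$. For $s<0$, the same argument applies with the roles of $\sigma=0$ and $\sigma=\ell$ swapped: the minimum becomes $e^{s\ell}$ and the maximum is $\max\{1,\,e^{s\ell/2}\cosh(a\ell/2)\}$. The ratio is again \eqref{eq:exact} with $|s|$.

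The main care point is the convexity and endpoint argument. It avoids any interior critical-point analysis and works because only the three corner pairs need to be realised, and these always exist among the eigenvalue pairs. The specialisation follows directly. At $s=0$ we get $\kappa_{p,q}=\max\{1,\cosh(\tfrac a2\log\kappa)\}=\cosh(\tfrac a2\log\kappa)$. This equals $1$ if and only if $a=0$ when $\kappa>1$, and it is strictly increasing in $|a|$. Hence the diagonal member is the unique minimiser: it attains the floor $1$, which by Proposition~\ref{prop:degree} is the least possible value, and every off-diagonal member misses it by a factor that grows with $|p-q|$.
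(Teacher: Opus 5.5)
Your proof is correct and follows the same route as the paper. Both use the $\cosh$ factorization of $\lambda$, pass to log-eigenvalues, place the minimum at a diagonal corner because $\cosh\geq1$, and place the maximum at one of the realized corners by a convexity/endpoint argument. The one variation is in the maximum step: the paper goes straight to a corner using joint convexity of $\log\lambda$ (affine plus $\log\cosh$) on the square $[\ell_{\min},\ell_{\max}]^{2}$, whereas you first eliminate $\delta$ and then combine monotonicity on $[0,\ell/2]$ with one-variable convexity on $[\ell/2,\ell]$, which works equally well but requires the sign split on $s$.
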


\begin{proof}
Since $\mathcal B$ is a Schur multiplier, Corollary~\ref{cor:diagbound} makes the
$\lambda_{ij}$ the whole spectrum, so $\kappa_{p,q}$ is the ratio of the largest eigenvalue to the smallest, and Corollary~\ref{cor:attain} makes each one $\lambda(d_i,d_j)$. Up to
a positive constant, \eqref{eq:lamab} is \eqref{eq:pq} with both exponents
lowered by $r^{\star}/2$, so the factorization \eqref{eq:known_family_metric}
applies to it with $s$ in place of $r$ and $a$ in place of $p-q$,
\begin{equation}
\lambda(x,y)\ \propto\ (xy)^{s/2}\,
\cosh\Bigl(\frac{a}{2}\log\frac{x}{y}\Bigr),
\label{eq:normalform}
\end{equation}
the separation the family was built on, now read on the spectrum. What remains is
the largest and the smallest of \eqref{eq:normalform} over the frames, and a
logarithm puts both within reach, since it turns that product into a sum. Write
$\ell_i=\log d_i$. At a frame $E_{ij}$ the logarithm of \eqref{eq:normalform} is
an affine function of $(\ell_i,\ell_j)$ carrying $s$ alone, plus
$\log\cosh(a(\ell_i-\ell_j)/2)$, which is convex and nonnegative. The values
below are scaled by $(d_{\min}d_{\max})^{-s/2}$, which the ratio cancels.

\emph{Minimum.} The affine part is least over the frames where $\sqrt{d_id_j}$
is smallest if $s\geq0$ and largest if $s<0$, and either end forces $d_i=d_j$,
where the convex part vanishes. The two are least together, at
$\kappa^{-|s|/2}$.

\emph{Maximum.} The sum is convex on the square $[\ell_{\min},\ell_{\max}]^{2}$,
so it is maximal at a corner, and every corner is a frame. The two diagonal
corners are the $E_{ii}$ at $d_{\min}$ and at $d_{\max}$, with values
$\kappa^{-s/2}$ and $\kappa^{\,s/2}$, and the other two are the single frame
pairing $d_{\min}$ with $d_{\max}$, where the power in \eqref{eq:normalform}
cancels that scaling exactly and the value is $\cosh(\tfrac{a}{2}\log\kappa)$. The
maximum is therefore
\[
\max\bigl\{\kappa^{\,|s|/2},\ \cosh(\tfrac{a}{2}\log\kappa)\bigr\},
\]
and dividing it by the minimum gives \eqref{eq:exact}.
\end{proof}

Only $\kappa$ enters, not the interior of the spectrum. By \eqref{eq:exact} a
member meets the floor $\kappa^{\,|s|}$ exactly when
$\cosh(\tfrac{a}{2}\log\kappa)\leq\kappa^{\,|s|/2}$, that is when
\begin{equation}
|p-q|\ \leq\ \tfrac{2}{\log\kappa}
\operatorname{arccosh}\bigl(\kappa^{\,|s|/2}\bigr)
\ =\ |r-r^{\star}|+\tfrac{2\log2}{\log\kappa}+O\bigl(\kappa^{-|s|}\bigr).
\label{eq:exactbudget}
\end{equation}
This is wider than the monotonicity budget \eqref{eq:budget} by
$2\log2/\log\kappa$, so Proposition~\ref{prop:quadrant} is the limit of
\eqref{eq:exactbudget} as $\kappa$ grows, and the slack it leaves out is the room
a non-monotone member has to attain the floor anyway.

Inside that budget \eqref{eq:exact} returns $\kappa^{\,|s|}$ whatever the shape
is, so $\kappa_{p,q}$ is flat in $p-q$ across a band and grows like
$\kappa^{\,|p-q|/2}$ only outside it. The band closes exactly at $r=r^{\star}$,
where \eqref{eq:exactbudget} has width zero. Within the pure-power Schur regime, the diagonal member $p=q=r/2$ minimizes $\kappa_{p,q}$ at each fixed $r$. Thus it suffices to tune $r$. The same reading weighs the two parameters
against each other: moving the degree by $t$ costs $\kappa^{\,t}$, while moving
the shape by $t$ costs $\cosh(\tfrac{t}{2}\log\kappa)$, which carries half that
exponent, so the degree is worth twice the shape.

\section{SELECTION RULES FOR $\lcr$, $\lcp$ AND $\lcq$}
\label{sec:choose}

In the pure-power Schur regime, Corollary~\ref{cor:attain} settles $p$ and $q$
once $r$ is fixed, namely $p=q=r/2$. That member attains \eqref{eq:lower} at
every $r$, and at $r=r^{\star}$ it is the only member \eqref{eq:budget} admits.
It is also the cheapest member, by \eqref{eq:slice}, and it names the
exponent as $r^{\star}=2-\gamma$, so only $\gamma$ remains to be estimated.

\textbf{A Hessian that is a power congruence.}
Proposition~\ref{prop:degree} constrains the frame diagonal, so a rule must be
stated in those terms. The condition is that the Hessian act by congruence with a
power of $X$,
\begin{equation}
\nabla^{2}f(X)[U]=c\,X^{(\gamma-2)/2}\,U\,X^{(\gamma-2)/2}
\label{eq:congr}
\end{equation}
for some $c>0$, at $X=X_\star$, since that gives
$b_{ij}=c\,(d_id_j)^{(\gamma-2)/2}$ exactly, which is the full pure power, and
$r^{\star}=2-\gamma$. Fix $T$ and $C$ in $\SPD$. The Hessian of
$\tfrac12\frob{X-T}^{2}$ is $U\mapsto U$, so $\gamma=2$ and $r^{\star}=0$. The
Hessian of $\tr(CX)-\log\det X$ is $U\mapsto X^{-1}UX^{-1}$, so $\gamma=0$ and
$r^{\star}=2$. The Hessian of $\tfrac12\frob{X^{-1}-T}^{2}$ is $U\mapsto X^{-2}UX^{-2}$ at its minimizer $X_\star=T^{-1}$. Thus $\gamma=-2$ and $r^{\star}=4$, above the degrees $0$, $1$ and $2$ of the Euclidean, Bures--Wasserstein and affine-invariant metrics, respectively. The first two
meet \eqref{eq:congr} at every $X$, so they name $r^{\star}$ before $X_\star$
is known, while the third meets it only at the minimizer. 

\textbf{A Hessian of mixed degree.}
A sum of terms of different degrees, such as an evidence lower bound or a
regularized loss, meets no single \eqref{eq:congr}, so the exponent has to be
estimated rather than read off. Write $u_{ij}=\log\sqrt{d_id_j}$. The diagonal
member has $\phi(d_i,d_j)=(d_id_j)^{r/2}=e^{\,ru_{ij}}$ by \eqref{eq:slice}, so
\eqref{eq:lambda} reads $\log\lambda_{ij}=r\,u_{ij}+\log b_{ij}$, affine in $r$.
The spread of the frame diagonal in the logarithm, which is the spectrum
itself only when $\mathcal B$ is a Schur multiplier, is then a quadratic in
$r$ with a closed-form minimizer,
\begin{equation}
\hat r=\arg\min_{r}\,\sum_{i\leq j}
\bigl(\log\lambda_{ij}-\overline{\log\lambda}\bigr)^{2}
=-\,\frac{\sum_{i\leq j}(u_{ij}-\bar u)\,\log b_{ij}}
{\sum_{i\leq j}(u_{ij}-\bar u)^{2}},
\label{eq:fit}
\end{equation}
where a bar is the mean over the $m$ frames. The right-hand side is the negative of the least-squares slope of $\log b_{ij}$ against $u_{ij}$. Thus $\hat r$ can be computed in a single pass over the frame data. The estimate is well defined when all $b_{ij}>0$ and the $u_{ij}$ are not all equal. The latter condition fails precisely when $X=cI$ for some $c>0$. In the pure power case,
it recovers $r^{\star}$ exactly. From here we index the $m$ frames by
$k=1,\dots,m$ and write $u_k$ and $v_k=\log b_k$ for the pair each one carries,
so that $\log\lambda_k=r\,u_k+v_k$.

The sum of squares is a surrogate. What \eqref{eq:diagbound} depends on is the
range of $\log\lambda_{k}$ rather than its spread, and minimizing that range,
\begin{equation}
\begin{aligned}
\check r&=\arg\min_{r}\,R(r),\\[-2pt]
R(r)&=\max_{k\leq m}\bigl(r\,u_{k}+v_{k}\bigr)
-\min_{k\leq m}\bigl(r\,u_{k}+v_{k}\bigr),
\end{aligned}
\label{eq:minimax}
\end{equation}
is a one-dimensional convex problem on the same $m$ numbers, since a maximum of
affine functions is convex. Both criteria are minus the slope of $v$ against
$u$, \eqref{eq:fit} fitted in $\ell_2$ and \eqref{eq:minimax} in $\ell_{\infty}$.
The logarithm of \eqref{eq:diagbound} is a range, and
$R(r)=2\min_{c}\max_{k}\lvert r\,u_{k}+v_{k}-c\rvert$ depends on the two extreme
frames alone, where the sum of squares depends on all $m$.
Note this can be solved as a linear program \cite{boyd2004convex}.

\textbf{A sampling approach to compute $\check r$ and $\hat{r}$ efficiently.}
Both criteria read the same $m$ numbers $b_{ij}$, one per frame, a count
quadratic in $n$. A single eigendecomposition of $X$ supplies the $d_i$ and the
frames, and each $b_{ij}$ of \eqref{eq:bij} then costs one Hessian-vector
product, since it pairs $E_{ij}$ against $\nabla^{2}f(X)[E_{ij}]$, which a
directional derivative of the gradient delivers without ever forming
$\mathcal B$. Tuning is therefore one eigendecomposition and $m$ products, paid
once against a run of many iterations. A slope is a two-point quantity, though,
and the frames are far from equally informative about it, so it is worth asking
what a subset costs.

\begin{proposition}[Estimation with fewer frames]
\label{prop:subset}
Fix $X\in\SPD$ with $\kappa=d_{\max}/d_{\min}>1$ and let $u_k$, $v_k$ for
$k=1,\dots,m$ be its frame data, so that $\log\lambda_k=r\,u_k+v_k$ on the
diagonal member by \eqref{eq:lambda} and \eqref{eq:slice}. Write $S_0$ for the
pair of diagonal frames $E_{ii}$ at $d_i=d_{\min}$ and at $d_i=d_{\max}$. Since
$u_{ij}=\tfrac12(u_{ii}+u_{jj})$, every $u_k$ lies in
$[\log d_{\min},\log d_{\max}]$, and $S_0$ attains both endpoints. Let
$\hat r$ and $\check r$ minimize \eqref{eq:fit} and \eqref{eq:minimax} over all
$m$ frames, and let $\varepsilon\in\R^{m}$ be the residual of the fit,
\begin{equation}
\varepsilon_k=(v_k-\bar v)+\hat r\,(u_k-\bar u) .
\label{eq:resid}
\end{equation}
For a nonempty $S\subseteq\{1,\dots,m\}$ write $\hat r_S$ and $\check r_S$ for
the minimizers of \eqref{eq:fit} and \eqref{eq:minimax} taken over $S$ alone,
let $\varepsilon_S$ and $R_S$ be the residual and the range restricted to $S$,
and put $\sigma_S^{2}=\sum_{k\in S}(u_k-\bar u_S)^{2}$ with $\bar u_S$ the mean
of $u$ on $S$. Then the following hold.
\begin{enumerate}
\item[(i)] If $\sigma_S>0$, then
\begin{equation}
\bigl|\hat r_S-\hat r\bigr|\ \leq\ \frac{\norm{\varepsilon_S}_2}{\sigma_S}.
\label{eq:subset}
\end{equation}
\item[(ii)] If $S\supseteq S_0$, then
\begin{equation}
0\ \leq\ R(\check r_S)-R(\check r)\ \leq\ 4\norm{\varepsilon}_{\infty}.
\label{eq:twoframerange}
\end{equation}
\end{enumerate}
\end{proposition}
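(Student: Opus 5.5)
For (i) I would compare the two least-squares slopes directly, using the decomposition of $v$ supplied by the full fit. Rearranging \eqref{eq:resid} gives
\[
v_k=\bar v-\hat r\,(u_k-\bar u)+\varepsilon_k\qquad\text{for every }k.
\]
By \eqref{eq:fit} restricted to $S$,
\[
\hat r_S=-\frac{\sum_{k\in S}(u_k-\bar u_S)\,v_k}{\sigma_S^{2}}.
\]
I substitute the decomposition into the numerator. The constant $\bar v$ drops out because $\sum_{k\in S}(u_k-\bar u_S)=0$. For the same reason,
\[
\sum_{k\in S}(u_k-\bar u_S)(u_k-\bar u)=\sum_{k\in S}(u_k-\bar u_S)^{2}=\sigma_S^{2},
\]
so the term in $\hat r$ contributes exactly $\hat r$. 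This should leave the exact identity
\[
\hat r_S-\hat r=-\frac{\sum_{k\in S}(u_k-\bar u_S)\,\varepsilon_k}{\sigma_S^{2}}.
\]
Cauchy--Schwarz then bounds the numerator by $\sigma_S\norm{\varepsilon_S}_2$, which gives \eqref{eq:subset}. This part is routine.

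For (ii), the left inequality is immediate, since $\check r$ minimizes $R$ over all $r$. For the right inequality I would rewrite both $R$ and $R_S$ in the residual coordinates. Write $\delta=r-\hat r$ and $w=\log\kappa$. Then $r u_k+v_k=\delta\,u_k+\varepsilon_k$ plus a constant independent of $k$, and a range ignores constants, so
\[
R(r)=\operatorname{range}_k\bigl(\delta u_k+\varepsilon_k\bigr),\qquad
R_S(r)=\operatorname{range}_{k\in S}\bigl(\delta u_k+\varepsilon_k\bigr).
\]
By the remark in the statement, every $u_k$ lies in $[\log d_{\min},\log d_{\max}]$, an interval of length $w$. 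Subadditivity of the range then gives the upper bound
\[
R(r)\leq |\delta|\,w+2\norm{\varepsilon}_\infty .
\]
Because $S\supseteq S_0$, the two frames of $S_0$ sit at the two endpoints of that interval. Their difference alone gives the lower bound
\[
R_S(r)\geq |\delta|\,w-2\norm{\varepsilon}_\infty .
\]
Combining the two yields $R(r)\le R_S(r)+4\norm{\varepsilon}_\infty$ for every $r$.

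To finish (ii), I chain three facts. First, apply the last inequality at $r=\check r_S$. Second, $\check r_S$ minimizes $R_S$, so $R_S(\check r_S)\le R_S(\check r)$. Third, $R_S\le R$ pointwise, because a range over a subset cannot exceed the range over the whole set. Together these give
\[
R(\check r_S)\le R_S(\check r_S)+4\norm{\varepsilon}_\infty\le R_S(\check r)+4\norm{\varepsilon}_\infty\le R(\check r)+4\norm{\varepsilon}_\infty ,
\]
which is \eqref{eq:twoframerange}.

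The only delicate point is the lower bound on $R_S$, which is exactly where the hypothesis $S\supseteq S_0$ enters. Without the two extreme frames, the slope term $|\delta|w$ could not be recovered on $S$, and the comparison between $R$ and $R_S$ would fail.
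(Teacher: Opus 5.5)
Your proposal is correct and follows the paper's proof essentially step for step. In (i) you substitute the full-fit decomposition $v_k=\bar v-\hat r(u_k-\bar u)+\varepsilon_k$ into the restricted slope and finish with Cauchy--Schwarz. In (ii) you use the same pointwise comparison $R(r)\leq|r-\hat r|\log\kappa+2\norm{\varepsilon}_{\infty}\leq R_S(r)+4\norm{\varepsilon}_{\infty}$, which comes from the extreme pair $S_0$, and chain it with $R_S\leq R$ and the minimality of $\check r_S$.
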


\begin{proof}
(i) Restricted least squares on $S$ is
\[
\hat r_S=-\frac{1}{\sigma_S^{2}}\sum_{k\in S}(u_k-\bar u_S)\,v_k .
\]
Substitute $v_k=\bar v-\hat r(u_k-\bar u)+\varepsilon_k$, which is
\eqref{eq:resid} rearranged. The two constants vanish against
$\sum_{k\in S}(u_k-\bar u_S)=0$ and the linear term returns $\hat r$, so
\[
\hat r_S=\hat r-\frac{1}{\sigma_S^{2}}\sum_{k\in S}(u_k-\bar u_S)\,\varepsilon_k ,
\]
and Cauchy--Schwarz bounds that sum by $\sigma_S\norm{\varepsilon_S}_2$.

(ii) By \eqref{eq:resid}, $\log\lambda_k=(r-\hat r)(u_k-\bar u)+\varepsilon_k$
up to a constant, which no range sees. A range of a sum is at most the sum of the
ranges, so $R(r)\leq|r-\hat r|\log\kappa+2\norm{\varepsilon}_{\infty}$, while
$R_S(r)\geq R_{S_0}(r)\geq|r-\hat r|\log\kappa-2\norm{\varepsilon}_{\infty}$
because $u$ spans $\log\kappa$ already on $S_0$ and enlarging a set only widens
its range. Hence $R-R_S\leq4\norm{\varepsilon}_{\infty}$ at every $r$. Dropping
frames lowers a maximum and raises a minimum, so $R_S\leq R$ pointwise and
$R(\check r)\geq R_S(\check r)\geq R_S(\check r_S)$. Subtracting that from
$R(\check r_S)$ bounds it by $R(\check r_S)-R_S(\check r_S)$ and so by
$4\norm{\varepsilon}_{\infty}$, and $\check r$ minimizes $R$, which gives the
first inequality.
\end{proof}

Note that on the extreme pair alone $\sigma_{S_0}^{2}=\tfrac12\log^{2}\kappa$ and
$\norm{\varepsilon_{S_0}}_2\leq\sqrt2\,\norm{\varepsilon}_{\infty}$, so
\eqref{eq:subset} leads to
\begin{equation}
\bigl|\hat r_{S_0}-\hat r\bigr|\,\log\kappa\ \leq\ 2\norm{\varepsilon}_{\infty}.
\label{eq:twoframe}
\end{equation}

\textbf{An objective built from a distance.}
For a squared-distance objective, we use the metric that defines the distance. The Riemannian Hessian of $\tfrac12\dist^{2}(\cdot,T)$ at $X_\star=T$
is the identity in the metric that defines $\dist$, so that metric is exactly
optimal for it. We call this \emph{reciprocity}. 
A least-squares fit $\tfrac12\frob{X-T}^{2}$ corresponds to the Euclidean metric,
and $\tfrac12\dist_{\mathrm{BW}}^{2}(\cdot,T)$ to Bures--Wasserstein, which the
fitted exponent cannot reach because it sits off the diagonal $p=q$. Likewise
$\tfrac12\dist_{\mathrm{AI}}^{2}(\cdot,T)$ corresponds to affine-invariant
geometry, and $\tfrac12\frob{\log X-\log T}^{2}$ to log-Euclidean, a weight the
family does not contain at all. A Fr\'echet mean averages several such terms. Under the
Euclidean and log-Euclidean metrics the Hessian is still the identity at the
barycenter, so the match stays exact, while under the others the minimizer is
none of the $T_i$ and the match is only as close as the spread of the data
allows.
This observation is consistent with \eqref{eq:congr}. By \eqref{eq:slice}, that condition makes the Euclidean Hessian a positive multiple of the metric operator at $p=q=(2-\gamma)/2$. It therefore gives the same local Hessian matching within the diagonal family. 

\section{EXPERIMENTS}
\label{sec:exp}
The experiments below score the tuned member against the named metrics a
practitioner would otherwise pick.

Every run below uses Algorithm~\ref{alg:msd} with the retraction \eqref{eq:retr}
and an Armijo line search \cite{absil2008optimization,boumal2023introduction},
starts from the same $X_0$ and stops at the same tolerance, so only
the weight differs. The tolerance is a relative objective gap.
Conditioning is always $\kappa_{\phi}$ at a common reference
solution.

\subsection{The exponent read off the Hessian}
\label{sec:congr}

We minimize the three objectives of Section~\ref{sec:choose}, the least-squares
fit, the Gaussian precision estimate and the inverse fit, whose power-congruence
Hessians name $r^{\star}=0$, $2$ and $4$ before anything is run. Here $T=C$ is a real
covariance, the class-conditional pixel covariance of a standard digits set at
$n=36$, and all three problems share $\kappa(X_\star)=9836$ because
$\kappa(A)=\kappa(A^{-1})$, so the objective alone moves $r^{\star}$. A power
congruence is a full pure power and a Schur multiplier, so
Proposition~\ref{prop:exact} gives $\kappa_{p,q}$ in closed form, and for a
diagonal weight of degree $r$ it reduces to $\kappa^{\,|r-r^{\star}|}$, which
Table~\ref{tab:congr} confirms to the digit. Off the diagonal a weight of the
right degree need not attain the floor, and the table shows both outcomes.
Bures--Wasserstein does attain it, because the extreme
$\lambda_{ij}$ fall on the frames $E_{ii}$ where its weight is a multiple of the
diagonal one of the same degree. It does so in all three rows only because no
$r^{\star}$ among them lies near its own degree. Read at $(p,q)=(1,0)$,
\eqref{eq:exact} meets the floor exactly when
$|1-r^{\star}|\geq\tfrac{2}{\log\kappa}\log\cosh(\tfrac12\log\kappa)$, which is
$0.85$ here and rises toward $1$ as $\kappa$ grows, against the $1$, $1$ and $3$
the three rows supply. At $r^{\star}=1$ its degree is exactly right and it still
pays $\cosh(\tfrac12\log\kappa)=49.6$ where the diagonal member $p=q=\tfrac12$
pays $1$, a penalty of order $\sqrt{\kappa}/2$. Matching the degree is therefore
never enough on its own.
Log-Euclidean is the other outcome, carrying the right degree on the second
objective
and still missing the floor by $116$ because it is built from the logarithmic
rather than the geometric mean, so the degree is necessary and not sufficient.
The third row is the one to note. A standard estimator has $r^{\star}=4$,
two degrees past affine-invariant, and there the best named weight is
$9.7\times10^{7}$ worse conditioned than the diagonal member at $r^{\star}$.
The frame diagonal is a pure power here, so $v_k$ is exactly affine in $u_k$,
the residual of Proposition~\ref{prop:subset} vanishes, and its pair $S_0$
suffices: two
Hessian-vector products in place of $m=666$ return each of $r^{\star}=0$, $2$ and
$4$ to twelve digits.

\begin{table}[t]
\centering
\setlength{\tabcolsep}{2.5pt}
\caption{The congruence rule on a real covariance, $n=36$ and
$\kappa(X_\star)=9836$. Entries are $\log_{10}\kappa_{\phi}$, so \eqref{eq:lower}
predicts $3.99\,|r-r^{\star}|$ from the degree $r$ of the weight alone. The three
objectives are those of Section~\ref{sec:choose} in order, and AP is the
Alpha-Procrustes member at $\alpha=1$. The tuned members in the first two rows are Euclidean ($r^{\star}=0$) and affine-invariant ($r^{\star}=2$), respectively. } 
\label{tab:congr}
\begin{tabular}{@{}lc cccccc@{}}
\toprule
objective & $r^{\star}$ & Eucl. & BW & aff.-inv. & log-Eucl. & AP & tuned \\
\midrule
least squares & $0$ & $\mathbf{0}$ & $3.99$ & $7.99$ & $7.99$ & $0.30$
& $\mathbf{0}$ \\
precision & $2$ & $7.99$ & $3.99$ & $\mathbf{0}$ & $2.07$ & $7.99$
& $\mathbf{0}$ \\
inverse fit & $4$ & $15.97$ & $11.98$ & $7.99$ & $7.99$ & $15.97$
& $\mathbf{0}$ \\
\bottomrule
\end{tabular}
\end{table}

\subsection{An exponent past every named metric}
\label{sec:ml}

\begin{figure}[t]
\centering
\includegraphics[width=\columnwidth]{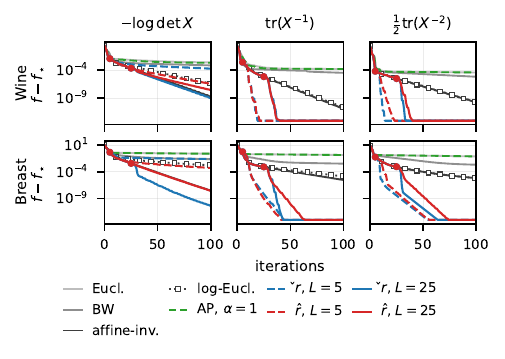}
\caption{Objective gap for \eqref{eq:ml} against iterations at $\mu=0.01$. Rows correspond to the two datasets, and columns to the three regularizers. The regularizers are labeled by $R-\tr X$, as in Table~\ref{tab:mu}. All runs start from $X_0=I$ and use the same line search. The two-stage curves run at the default
$r=2$, the affine-invariant weight, until the marker at $L$ and branch
there. A five-step pilot suffices on the barriers but not under the
log-determinant, which is why $L=25$ is used throughout. Log-Euclidean carries
open squares because it can sit under affine-invariant to within half a percent.}
\label{fig:conv}
\end{figure}

The objective is a Mahalanobis matrix learned from labeled triplets under a
regularizer \cite{weinberger2009distance,davis2007information},
\begin{equation}
f(X)=\frac{1}{|\mathcal T|}\sum_{t\in\mathcal T}
s\bigl(1+\langle X,D_t\rangle\bigr)+\mu R(X),
\label{eq:ml}
\end{equation}
where $s(z)=\log(1+e^{z})$ is the softplus and
$D_t=(x_a-x_{+})(x_a-x_{+})^{\top}-(x_a-x_{-})(x_a-x_{-})^{\top}$ for a triplet
of an anchor $x_a$, a positive $x_{+}$ and a negative $x_{-}$. We use the Wine
and Breast Cancer Wisconsin datasets from the UCI repository \cite{dua2019uci},
as distributed with scikit-learn \cite{pedregosa2011scikit}, with a stratified
$70/30$ split repeated
three times, $|\mathcal T|=2000$ triplets per split, $\mu=0.01$ and $X_0=I$.
Features are centered. We consider three regularizers: $R=\tr X-\log\det X$, $\tr X+\tr(X^{-1})$, and $\tr X+\tfrac12\tr(X^{-2})$. Their diagonal frame curvatures are $d_i^{-2}$, $2d_i^{-3}$ and $3d_i^{-4}$, respectively. For the regularizers alone, these correspond to $r^{\star}=2$, $3$ and $4$. We call the last two
barriers. The log-determinant is the usual choice on $\SPD$, and the exponent it
sets is exactly affine-invariant, so nothing is left to tune there. The two
barriers grow faster at the boundary and carry $r^{\star}$ past every named
metric, which is what makes them worth running.
The linear term adds nothing to the Hessian and is there only to bound
$X_\star$ above. The triplet loss carries no degree at all, so the sum carries
none, no rule of
Section~\ref{sec:choose} names the exponent and it has to be estimated, from the
least-squares fit \eqref{eq:fit} or the range criterion \eqref{eq:minimax}. Both
read the frame diagonal $b_{ij}$ and the eigenvalues $d_i$ at a point, and the
point that matters is the solution being sought, so we run
Algorithm~\ref{alg:msd} in two stages. Descend $L$ steps from $X_0$ at the
default exponent $r=2$, evaluate the criterion at the point reached, then
continue from that point at the exponent it returns. Every iteration count we
report includes the $L$ pilot steps. Reading the frame diagonal costs $m$
Hessian-vector products in general, but they batch into one pass over the
triplets here, so either criterion costs $0.9$ to $1.5$ iterations.

Figure~\ref{fig:conv} plots the objective gap against iterations for all six
settings. On the four barrier panels both criteria reach machine precision inside
the budget and no named weight does, because the exponent they return sits near
$3$ or $4$ and the nearest named degree is $2$. The range criterion returns
$3.45$ and $4.15$ on Wine and $3.10$ and $4.05$ on Breast under the two barriers,
where the least-squares fit runs higher, to $4.87$. Under the log-determinant
both return near $2$, and that column is where the criteria add least, since
affine-invariant already carries that regularizer's exponent.

Figure~\ref{fig:budget} varies the number of frames used for fitting from two to $m$. Each subset contains the pair $S_0$ from Proposition~\ref{prop:subset}. The remaining frames are sampled at random. Dropping frames costs little, and for one of the two
criteria it gains. The range criterion is flat, so the extra frames are not
required. The least-squares fit, on the other hand, is better off with fewer.
No subset drawn violated \eqref{eq:subset}, \eqref{eq:twoframerange} or
\eqref{eq:twoframe}.
Read with Table~\ref{tab:congr}, where the residual vanishes so that any two
Hessian-vector products reproduce every $r^{\star}$ exactly, the $n$ diagonal
frames suffice in both regimes.

\begin{figure}[t]
\centering
\includegraphics[width=\columnwidth]{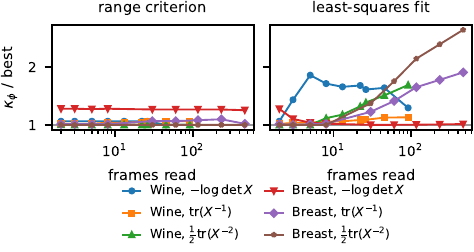}
\caption{Conditioning delivered against the number of frames read, over the six
settings of \eqref{eq:ml} at $\mu=0.01$, with the range criterion
\eqref{eq:minimax} on the left and the least-squares fit \eqref{eq:fit} on the
right. Curves are named by dataset and by $R-\tr X$ as in Table~\ref{tab:mu}.
Each value is $\kappa_{\phi}$ at the exponent the criterion returns, divided by
the best any diagonal member attains, so $1$ is the floor of the sweep. Each subset contains the pair $S_0$ from Proposition~\ref{prop:subset}. Additional frames are sampled at random. The curves show the median over twelve draws at each subset size. Reading more frames does nothing for the range criterion and hurts the least-squares fit. }
\label{fig:budget}
\end{figure}

Table~\ref{tab:mu} sweeps the regularization weight, since $\mu=0.01$ is a single
setting and the advantage need not survive elsewhere. Each entry is
a gain, the condition number of the best named metric divided by the one the
criterion delivers, so a value above one is the factor by which tuning improves
the conditioning and a value below one is a loss. The two barriers gain in every
setting, and by the widest margin where the regularization is weakest. The
log-determinant is the exception, gaining little and sometimes losing, since
affine-invariant already carries the exponent it sets.

\textbf{On Alpha-Procrustes.} The weights of \cite{zhou2026alpha} carry
degree $2(1-\alpha)$, so the member at $\alpha=1$ they recommend has degree
zero, like the Euclidean case. 
Its weights are given by $2(x^{2}+y^{2})/(x+y)^{2}$. This equals $1$ at $x=y$ and approaches $2$ as $x/y$ tends to $0$ or $\infty$. It agrees with the Euclidean weight on the diagonal frames $E_{ii}$, but differs on off-diagonal frames with $d_i\neq d_j$. Table~\ref{tab:congr} compares the resulting Hessian condition numbers. For the precision and inverse-fit objectives, the extreme Hessian eigenvalues occur on the diagonal frames, so the two metrics have the same condition number. For the least-squares objective, $\mathcal B$ is the identity. Euclidean then has condition number $1$, while the variation of the Alpha-Procrustes weight gives a condition number (close to) $2$. Figure~\ref{fig:conv} also shows similar convergence curves for these two metrics from $X_0=I$.

\begin{table}[t]
\centering
\setlength{\tabcolsep}{4pt}
\caption{Conditioning gains from tuning the exponent for different regularization weights. Each entry is  $\bigl(\min_{\phi}\kappa_{\phi}\bigr)/\kappa_{r}$, where the minimum is taken over the Euclidean, Bures--Wasserstein and affine-invariant metrics. We select $r$ after the $L=25$ pilot using either the range criterion $\check r$ in \eqref{eq:minimax} or the least-squares fit $\hat r$ in \eqref{eq:fit}. Both criteria use all $m$ frames. A value above one gives the factor by which tuning improves the conditioning. The two criteria agree closely at $\mu=1$ and separate as the regularizer weakens, with $\check r$ generally ahead. }
\label{tab:mu}
\begin{tabular}{@{}l cc @{\quad} cc @{\quad} cc@{}}
\toprule
& \multicolumn{2}{c}{$\mu=1$} & \multicolumn{2}{c}{$\mu=0.1$}
& \multicolumn{2}{c}{$\mu=0.01$} \\
\cmidrule(lr){2-3}\cmidrule(lr){4-5}\cmidrule(lr){6-7}
$R-\tr X$ & $\check r$ & $\hat r$ & $\check r$ & $\hat r$ & $\check r$ & $\hat r$ \\
\midrule
\multicolumn{7}{@{}l}{\emph{Wine}} \\
$-\log\det X$ & $1.06$ & $1.02$ & $1.67$ & $1.19$ & $1.00$ & $0.74$ \\
$\tr(X^{-1})$ & $1.68$ & $1.67$ & $4.20$ & $3.90$ & $10.8$ & $10.2$ \\
$\tfrac12\tr(X^{-2})$ & $1.96$ & $1.92$ & $5.54$ & $4.68$ & $17.1$ & $11.4$ \\
\midrule
\multicolumn{7}{@{}l}{\emph{Breast}} \\
$-\log\det X$ & $1.04$ & $1.02$ & $1.05$ & $1.04$ & $0.92$ & $0.98$ \\
$\tr(X^{-1})$ & $2.63$ & $2.36$ & $6.43$ & $4.38$ & $14.3$ & $7.32$ \\
$\tfrac12\tr(X^{-2})$ & $3.88$ & $3.29$ & $10.9$ & $6.55$ & $31.1$ & $11.8$ \\
\bottomrule
\end{tabular}
\end{table}

\subsection{Tuning the shape}
\label{sec:shape-tuning}

We learn an SPD task covariance for the seven torque outputs in the
SARCOS inverse-dynamics dataset~\cite{vijayakumar2000locally}.
A multi-output Gaussian process uses this matrix to model dependence
between the outputs \cite{bonilla2007multi}.
Let $Y\in\R^{N\times7}$ contain the standardized torques and let
$\mathbf y=\operatorname{vec}(Y)$.
For an input covariance matrix $\Gamma\in\R^{N\times N}$ and noise
variance $\nu$, the model has
\begin{equation}
 \Sigma(X)=X\otimes\Gamma+\nu I_{7N},\qquad
 X\in\mathbb S_{++}^{7}.
 \label{eq:sarcos-covariance}
\end{equation}
We minimize the Gaussian negative log marginal likelihood
\cite{williams2006gaussian},
\begin{equation}
 f(X)=\frac{1}{2N}\left[
 \log\det\Sigma(X)+\mathbf y^\top\Sigma(X)^{-1}\mathbf y\right].
 \label{eq:sarcos-likelihood}
\end{equation}
The input covariance $\Gamma$ uses a radial basis function (RBF) kernel.
We calibrate its parameters and $\nu$ once at $X=I$, then hold them
fixed across methods.
We use seven training subsets, four with $N=512$, two with $N=2{,}048$
and one with $N=4{,}096$.
We standardize each training subset separately.

As in Section~\ref{sec:ml}, we descend $L=25$ steps from $X_0=I$ at
the affine-invariant exponent $r=2$, then tune at the point reached $X_L$.
The preceding experiments fit $r$ on the diagonal $p=q$.
Here we also tune the shape $a=p-q$ of Proposition~\ref{prop:exact}.
Exchanging $p$ and $q$ leaves the metric unchanged, so we work with
its magnitude $\delta=|a|$.

The separation of exponent and shape suggests a tuning heuristic based on the frame data at $X_L$. After the first exponent fit, we use the shape to raise the smaller entries of the scaled diagonal toward their maximum. We then refit the exponent using the adjusted diagonal. 
At $X_L$, read the frame diagonal $b_{ij}$ and the eigenvalues
$d_i$ as in Section~\ref{sec:choose}. All seven pilot points have positive frame curvatures and pass the numerical non-degeneracy checks for exponent fitting. 
Keep the frame data $u_k,v_k$, with $b_k=b_{ij}$ for the frame
associated with $(i,j)$. Write $\xi_k=\tfrac12\bigl|\log(d_i/d_j)\bigr|$
for that pair and evaluate the scaled diagonal \eqref{eq:lambda} at $X_L$.
The factorization \eqref{eq:known_family_metric} gives, after removing
the common factor $4^{\delta^2}$,
\begin{equation}
  \widetilde{\lambda}_k(r,\delta)=4^{-\delta^2}\lambda_k(r,\delta)=e^{r u_k+v_k}\cosh(\delta\xi_k).
 \label{eq:shape-curvatures}
\end{equation}
The factor cancels from the ratio of the largest to the smallest entry.

At $\delta=0$, let $r_0$ be $\hat r$ from the least-squares fit
\eqref{eq:fit} or $\check r$ from the range criterion \eqref{eq:minimax}.
Set $W_0=\max_k\widetilde{\lambda}_k(r_0,0)$.
Choose the largest $\delta\geq0$ for which every
$\widetilde{\lambda}_k(r_0,\delta)$ is at most $W_0$.
For a nonscalar spectrum, this gives
\begin{equation}
 \delta^\star=
 \min_{\xi_k>0}
 \frac{\operatorname{arccosh}\bigl(W_0/\widetilde{\lambda}_k(r_0,0)\bigr)}{\xi_k}.
 \label{eq:shape-choice}
\end{equation}
Each frame with $\xi_k>0$ gives an upper bound on $\delta$.
For $0\leq\delta\leq\delta^\star$, the maximum normalized entry stays at
$W_0$ and the minimum is nondecreasing. Thus \eqref{eq:shape-choice}
minimizes their ratio over this interval.

To obtain $r_1$, repeat the fitting rule from the first step with
$\delta=\delta^\star$ in \eqref{eq:shape-curvatures}.
The selected metric has
\begin{equation}
 p=\frac{r_1+\delta^\star}{2},\qquad
 q=\frac{r_1-\delta^\star}{2}.
 \label{eq:shape-parameters}
\end{equation}
We select these parameters once and continue descent from $X_L$.
In Figure~\ref{fig:sarcos-shape}, Shape-LS uses the least-squares fit
and Shape-Range uses the range criterion.
Figure~\ref{fig:shape-sandwich} shows these three steps in the parameter plane.
Geometrically, this three-step construction can reach any member of the
$(p,q)$ family, up to exchanging $p$ and $q$.

\begin{figure}[t]
 \centering
 \includegraphics[width=0.6\columnwidth]{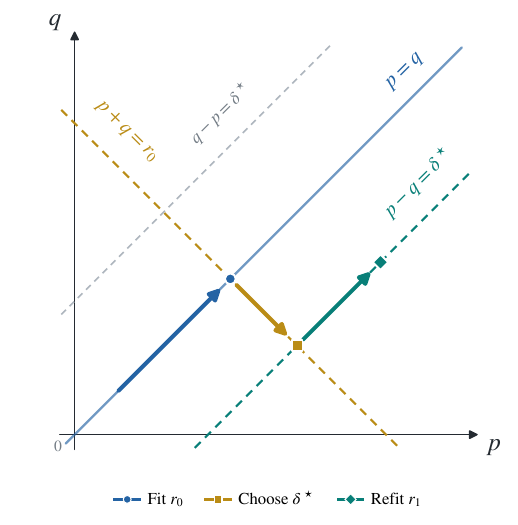}
 \caption{Selecting the exponent and shape in the $(p,q)$ family.
 Blue marks the first exponent fit on $p=q$.
 Yellow selects $\delta^\star$ along $p+q=r_0$, and green refits
 the exponent along $p-q=\delta^\star$.
 The gray branch contains the same metrics with $p$ and $q$ exchanged.
 The positions are schematic. The arrows give the selection order.
 The final fit can move in either direction along the green line.}
 \label{fig:shape-sandwich}
\end{figure}

We use the diagonal members returned by the same fitting rules as the
$r$-only baselines. We also compare with Euclidean, BW, affine-invariant
and log-Euclidean.
All eight methods continue from $X_L$ with the same Armijo history.
The four fitted methods read all $m=7(7+1)/2=28$ entries $b_{ij}$
of the frame diagonal.
Runs stop when
$[f(X)-f(X_{\rm ref})]/[f(I)-f(X_{\rm ref})]\leq10^{-8}$,
with a total budget of 2,000 iterations.
Here $X_{\rm ref}$ is a common numerical reference solution.
We evaluate $\kappa_\phi$ from the full preconditioned Hessian at $X_{\rm ref}$.

\begin{figure*}[t]
 \centering
 \includegraphics[width=0.94\textwidth]{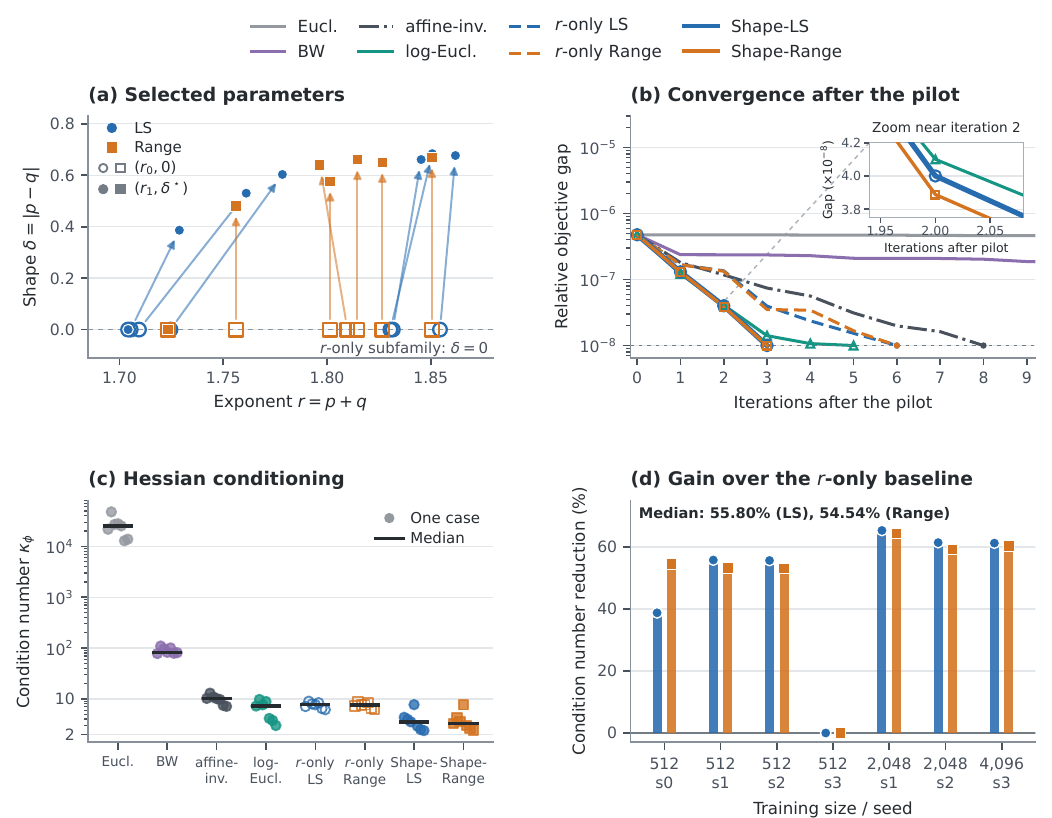}
 \caption{Exponent and shape tuning on SARCOS over seven cases.
 (a) Open markers show $(r_0,0)$ and filled markers show $(r_1,\delta^\star)$.
 (b) Median relative objective gap after the shared 25-iteration affine-invariant pilot.
 Completed runs are held at $10^{-8}$ for aggregation.
 The inset enlarges the second iteration after the pilot.
 (c) Condition number $\kappa_\phi$ of the full preconditioned Hessian at $X_{\rm ref}$.
 Each point is a case and black bars mark medians.
 (d) Percentage reduction $100(1-\kappa_\phi/\kappa_{r_0})$ relative to
 the corresponding $r$-only baseline.
 Each baseline uses the initial exponent $r_0$ from the same fitting rule
 with $\delta=0$.}
 \label{fig:sarcos-shape}
\end{figure*}

With either fitting rule, the selected shape is positive in six cases
(Fig.~\ref{fig:sarcos-shape}(a)).
The remaining case, $N=512$ with seed 3, gives $\delta^\star=0$.
The full condition number improves over the corresponding diagonal
baseline in the six positive-shape cases and stays unchanged in the seventh.
Median $\kappa_\phi$ falls from $7.70$ to $3.53$ for the least-squares fit
and from $7.64$ to $3.28$ for the range criterion
(Fig.~\ref{fig:sarcos-shape}(c)).
Relative to the corresponding diagonal member, the median reduction is
$55.80\%$ for the least-squares fit and $54.54\%$ for the range criterion
(Fig.~\ref{fig:sarcos-shape}(d)).
Both choices need a median of 28 iterations, including the pilot.
Each diagonal member needs 31, log-Euclidean needs 30 and
affine-invariant needs 33.

Both fitting rules read the diagonal $b_{ij}$ of the Euclidean Hessian
in the frames $E_{ij}$.
At a critical point where $\mathcal B$ is a Schur multiplier,
the entries $\lambda_{ij}$ give the whole Riemannian Hessian spectrum by
Corollary~\ref{cor:diagbound}.
Coupling between frames makes the diagonal ratio a surrogate for
$\kappa_\phi$.
The diagonal ratio is non-increasing under the range refit. The least-squares refit minimizes the variance of the log diagonal. 

Shape tuning reuses $d_i$, the frames and the diagonal $b_{ij}$ from
the first exponent fit. It adds one exponent fit and an $O(m)$ pass for
\eqref{eq:shape-choice}. No additional Hessian-vector products are needed.
Both least-squares fits are closed form, so the arithmetic after reading
$b_{ij}$ remains $O(m)$. For the range criterion, we solve two linear programs
instead of one. The fitting data still require $O(m)$ storage.

\section{CONCLUSION}

The family of metrics defined by $X^{p}LX^{q}+X^{q}LX^{p}=U$ contains Euclidean, Bures--Wasserstein
and affine-invariant geometry as exact members and reaches past all three. Each iteration of Algorithm~\ref{alg:msd} uses an eigendecomposition and the retraction \eqref{eq:retr}, which preserves positive definiteness. The conditioning of the Riemannian Hessian at the solution
obeys a floor set by $r=p+q$ alone, and in the pure-power Schur regime the
diagonal member $p=q=r/2$ attains that floor, so the two-parameter choice
collapses to one number. We give principled ways to estimate $r$, and show where
tuning it helps and where a named metric already suffices.  The SARCOS example also shows a further gain from tuning the shape. 


\bibliographystyle{IEEEbib}
\bibliography{references.bib}

\end{document}